\newcommand{\bmtx}{\begin{bmatrix}}
\newcommand{\emtx}{\end{bmatrix}}
\newcommand{\bsmtx}{\left[ \begin{smallmatrix}} 
\newcommand{\esmtx}{\end{smallmatrix} \right]} 
\newcommand{\field}[1]{\mathbb{#1}}
\newcommand{\R}{\field{R}}
\newcommand{\N}{\field{N}}
\newtheorem{lemma}{Lemma}
\newtheorem{theorem}{Theorem}
\newcommand{\nCk}[2]{ \left( \begin{smallmatrix} #1 \\ #2
    \end{smallmatrix} \right) } 
\newcommand{\cA}{{\cal A}}
\newcommand{\cV}{{\cal V}}
\newcommand{\cZ}{{\cal Z}}
\newcommand{\cS}{{\cal S}}
\newcommand{\cI}{{\cal I}}
\newcommand{\Rn}{\R[x]}
\newcommand{\sos}{{\Sigma[x]}}
\newcommand{\dv}{u}
\newcommand{\ch}{\mbox{convhull}}
\title{\LARGE \bf
Simplification Methods for Sum-of-Squares Programs
}
\author{Peter Seiler
\thanks{P. Seiler is with Aerospace and Engineering Mechanics Department, 
        University of Minnesota, {\tt\small seiler@aem.umn.edu}}, %
Qian Zheng%
\thanks{Q. Zheng is with Aerospace and Engineering Mechanics Department, 
        University of Minnesota, {\tt\small qzheng@aem.umn.edu}}, %
and Gary J. Balas
\thanks{G.J. Balas is with Aerospace and Engineering Mechanics Department, 
        University of Minnesota, {\tt\small balas@umn.edu}}%
}
\begin{document}

\maketitle
\thispagestyle{empty}
\pagestyle{empty}

\begin{abstract}
  
  A sum-of-squares is a polynomial that can be expressed as a sum of
  squares of other polynomials.  Determining if a sum-of-squares
  decomposition exists for a given polynomial is equivalent to a
  linear matrix inequality feasibility problem.  The computation
  required to solve the feasibility problem depends on the number of
  monomials used in the decomposition.  The Newton polytope is a
  method to prune unnecessary monomials from the decomposition.  This
  method requires the construction of a convex hull and this can be
  time consuming for polynomials with many terms.  This paper presents
  a new algorithm for removing monomials based on a simple property of
  positive semidefinite matrices.  It returns a set of monomials that
  is never larger than the set returned by the Newton polytope method
  and, for some polynomials, is a strictly smaller set. Moreover, the
  algorithm takes significantly less computation than the convex hull
  construction.  This algorithm is then extended to a more general
  simplification method for sum-of-squares programming.

\end{abstract}

\section{Introduction}
\label{sec:intro}

A polynomial is a sum-of-squares (SOS) if it can be expressed as a sum
of squares of other polynomials.  There are close connections between
SOS polynomials and positive semidefinite matrices
\cite{chesi99,chesi09,choi95,powers98,parrilo00,lasserre01,parrilo03}.
For a given polynomial the search for an SOS decomposition is
equivalent to a linear matrix inequality feasibility problem.  It is
also possible to formulate optimization problems with polynomial
sum-of-squares constraints \cite{parrilo00,parrilo03}.  There is
freely available software that can be used to solve these SOS
feasibility and optimization problems
\cite{sostools04,yalmip04,nasaworkshop09,henrion09}.  Many nonlinear
analysis problems, e.g. Lyapunov stability analysis, can be formulated
within this optimization framework
\cite{parrilo00,parrilo03,tan06,topcu08b}.

Computational growth is a significant issue for these optimization
problems. For example, consider the search for an SOS decomposition:
given a polynomial $p$ and a vector of monomials $z$, does there exist
a matrix $Q\succeq 0$ such that $p = z^T Q z$?  The computation
required to solve the corresponding linear matrix inequality
feasibility problem grows with the number of monomials in the vector
$z$.  The Newton polytope \cite{reznick78,sturmfels98} is a method to
prune unnecessary monomials from the vector $z$.  This method is
implemented in SOSTOOLs \cite{sostools04}. One drawback is that this
method requires the construction of a convex hull and this
construction itself can be time consuming for polynomials with many
terms.

This paper presents an alternative monomial reduction method called
the zero diagonal algorithm.  This algorithm is based on a simple
property of positive semidefinite matrices: if the $(i,i)$ diagonal
entry of a positive semidefinite matrix is zero then the entire
$i^{th}$ row and column must be zero.  The zero diagonal algorithm
simply searches for diagonal entries of $Q$ that are constrained to be
zero and then prunes the corresponding monomials.  This algorithm can
be implemented with very little computational cost using the Matlab
\texttt{find} command.  It is shown that final list of monomials
returned by the zero diagonal algorithm is never larger than the
pruned list obtained from the Newton polytope method.  For some
problems the zero diagonal algorithm returns a strictly smaller set of
monomials.  Results contained in this paper are similar to and
preceded by those found in the prior work \cite{lofberg09,waki09}.

The basic idea in the zero diagonal algorithm is then extended to a
more general simplification method for sum-of-squares programs.  The
more general method also removes free variables that are implicitly
constrained to be equal to zero.  This can improve the numerical
conditioning and reduce the computation time required to solve the SOS
program.  Both the zero diagonal elimination algorithm and the
simplification procedure for SOS programs are implemented in SOSOPT
\cite{nasaworkshop09}. 


\section{SOS Polynomials}
\label{sec:sospoly}

$\N$ denotes the set of nonnegative integers, $\{0, 1, \ldots\}$, and
$\N^n$ is the set of $n$-dimensional vectors with entries in $\N$. For
$\alpha \in \N^n$, a \underline{monomial} in variables
$\{x_1,\ldots,x_n\}$ is given by $x^\alpha:= x_1^{\alpha_1}
x_2^{\alpha_2} \cdots x_n^{\alpha_n}$.  $\alpha$ is the degree vector
associated with the monomial $x^\alpha$. The degree of a monomial is
defined as $\deg{x^\alpha} := \sum_{i=1}^n \alpha_i$.  A
\underline{polynomial} is a finite linear combination of monomials:
\begin{align} 
\label{eq:poly}
p := \sum_{\alpha \in \cA}
  c_\alpha x^\alpha = \sum_{\alpha \in \cA} c_\alpha x_1^{\alpha_1}
  x_2^{\alpha_2} \cdots x_n^{\alpha_n}
\end{align}
where $c_\alpha \in \R$, $c_\alpha \ne 0$, and $\cA$ is a finite
collection of vectors in $\N^n$.  $\Rn$ denotes the set of all
polynomials in variables $\{x_1, \ldots, x_n\}$ with real
coefficients.  Using the definition of $\deg$ for a monomial, the
degree of $p$ is defined as $\deg{p}:= \max_{\alpha \in \cA} \ 
\ [ \deg{x^\alpha} ]$.

A polynomial $p$ is a \underline{sum-of-squares} (SOS) if there exist
polynomials $\{f_i\}_{i=1}^m$ such that $p = \sum_{i=1}^m f_i^2$.  The
set of SOS polynomials is a subset of $\Rn$ and is denoted by $\sos$.
If $p$ is a sum-of-squares then $p(x) \ge 0$ $\forall x \in \R^n$.
However, non-negative polynomials are not necessarily SOS 
\cite{reznick00}.

Define $z$ as the column vector of all monomials in variables $\{x_1,
\ldots, x_n\}$ of degree $\le d$: \footnote{Any ordering of the
  monomials can be used to form $z$. In Equation~\ref{eq:monomvec},
  $x^\alpha$ precedes $x^\beta$ in the definition of $z$ if
  $\deg{x^\alpha} < \deg{x^\beta}$ OR $\deg{x^\alpha} =
  \deg{x^\beta}$ and the first nonzero entry of $\alpha-\beta \mbox{
    is } > 0$. } 
{\small
\begin{align}
\label{eq:monomvec}
z:=  \bmtx 1, \ x_1, \ x_2, \ \ldots, \ x_n, 
   \ x_1^2, \ x_1 x_2, \ \ldots, \ x_n^2, \ \ldots, \ x_n^d  \emtx^T 
\end{align}
} 
There are $\nCk{k+n-1}{k}$ monomials in $n$ variables of degree $k$.
Thus $z$ is a column vector of length $l_z := \sum_{k=0}^d
\nCk{k+n-1}{k} = \nCk{n+d}{d}$.  If $f$ is a polynomial in $n$
variables with degree $\le d$ then by definition $f$ is a finite
linear combination of monomials of degree $\le d$.  Consequently,
there exists $a \in \R^{l_z}$ such that $f=a^T z$.  

Two useful facts from \cite{reznick78} are:
\begin{enumerate} 
\item If $p$ is a sum-of-squares then $p$ must have even degree.
\item If $p$ is degree $2d$ ($d \in \N$) and $p = \sum_{i=1}^m f_i^2$
  then $\deg{f_i} \le d$  $\forall i$.
\end{enumerate}

The following theorem, introduced as the ``Gram Matrix'' method by
\cite{choi95,powers98}, connects SOS polynomials and positive
semidefinite matrices.

\vspace{0.1in}
\begin{theorem}
\label{thm:gram}
Let $p \in \Rn$ be a polynomial of degree $2d$ and $z$ be the $l_z
\times 1$ vector of monomials defined in Equation~\ref{eq:monomvec}.
Then $p$ is a SOS if and only if there exists a symmetric matrix $Q
\in \R^{l_z \times l_z}$ such that $Q\succeq 0$ and $p = z^T Q z$.
\end{theorem}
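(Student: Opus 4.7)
The plan is a standard Gram matrix argument, proving the two implications separately, with the reverse direction leveraging the two facts from Reznick cited just before the theorem.

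For the ``if'' direction, I would assume there exists a symmetric $Q \succeq 0$ with $p = z^T Q z$. Since $Q$ is symmetric and positive semidefinite, it admits a factorization $Q = L^T L$ for some $L \in \R^{m \times l_z}$ (obtained, e.g., by a Cholesky or eigenvalue decomposition, with $m = \mathrm{rank}(Q)$). Then
\begin{align*}
  p = z^T Q z = z^T L^T L z = (Lz)^T (Lz) = \sum_{i=1}^m (L_i z)^2,
\end{align*}
where $L_i$ denotes the $i$-th row of $L$. Each $L_i z$ is an $\R$-linear combination of the entries of $z$, so $f_i := L_i z \in \Rn$ and $p = \sum_{i=1}^m f_i^2$ exhibits $p$ as SOS.

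For the ``only if'' direction, suppose $p = \sum_{i=1}^m f_i^2$ with $f_i \in \Rn$. Since $p$ has degree $2d$, Reznick's second fact gives $\deg f_i \le d$ for all $i$. By construction $z$ contains every monomial in $x_1,\ldots,x_n$ of degree at most $d$, so each $f_i$ is an $\R$-linear combination of entries of $z$; that is, there exist vectors $a_i \in \R^{l_z}$ with $f_i = a_i^T z$. Then
\begin{align*}
  p = \sum_{i=1}^m (a_i^T z)^2
    = \sum_{i=1}^m z^T a_i a_i^T z
    = z^T \left( \sum_{i=1}^m a_i a_i^T \right) z.
\end{align*}
Set $Q := \sum_{i=1}^m a_i a_i^T$. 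This $Q$ is symmetric, and for any $v \in \R^{l_z}$, $v^T Q v = \sum_i (a_i^T v)^2 \ge 0$, so $Q \succeq 0$, and $p = z^T Q z$ by construction.

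There is no real obstacle here; the only subtlety worth flagging is that the representation $f_i = a_i^T z$ in the reverse direction requires the a priori degree bound $\deg f_i \le d$, which is exactly why Reznick's two facts are cited immediately before the theorem. Without that bound one could still write $f_i$ as a linear combination of monomials, but not necessarily of the entries of the prescribed vector $z$, and the resulting $Q$ would not have the stated dimensions.
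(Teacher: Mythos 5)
Your proof is correct and follows essentially the same Gram matrix argument as the paper: the forward direction uses the degree bound $\deg f_i \le d$ to write $f_i = a_i^T z$ and assembles $Q = \sum_i a_i a_i^T$ (the paper's $Q = AA^T$), and the reverse direction factors the positive semidefinite $Q$ and reads off the squares. The only differences are notational (row factorization $Q = L^T L$ versus column factorization $Q = AA^T$), so there is nothing further to add.
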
 
\begin{proof}
  ($\Rightarrow$) If $p$ is a SOS, then there exists polynomials
  $\{f_i\}_{i=1}^m$ such that $p = \sum_{i=1}^m f_i^2$.  By fact 2
  above, $\deg{f_i} \le d$ for all $i$.  Thus, for each $f_i$ there
  exists a vector, $a_i \in \R^{l_z}$, such that $f_i = a_i^T z$.
  Define the matrix $A\in \R^{l_z \times m}$ whose $i^{th}$ column
  is $a_i$ and define $Q:= A A^T \succeq 0$. Then $p = z^T Q z$.
  
  ($\Leftarrow$) Assume there exists $Q=Q^T \in \R^{l_z \times l_z}$
  such that $Q \succeq 0$ and $p = z^T Q z$.  Define $m:=
  rank(Q)$.  There exists a matrix $A \in \R^{l_z \times m}$ such that
  $Q = A A^T$. Let $a_i$ denote the $i^{th}$ column of $A$ and define
  the polynomials $f_i := z^T a_i$. By definition of $f_i$, $p =
  z^T (AA^T) z = \sum_{i=1}^m f_i^2$.
\end{proof}
\vspace{0.1in}


Determining if an SOS decomposition exists for a given polynomial $p$
is equivalent to a feasibility problem:
\begin{align}
  \label{eq:sosfeas}
  \mbox{Find } Q\succeq 0 \mbox{ such that } p=z^TQz
\end{align}
$Q$ is constrained to be positive semi-definite and equating
coefficients of $p$ and $z^TQz$ imposes linear equality constraints on
the entries of $Q$. Thus this is a linear matrix inequality (LMI)
feasibility problem.  There is software available to solve for SOS
decompositions \cite{sostools04,yalmip04,nasaworkshop09}.  These
toolboxes convert the SOS feasibility problem to an LMI problem.  The
LMI problem is then solved with a freely available LMI solver, e.g.
Sedumi \cite{sedumi99}, and an SOS decomposition is constructed if a
feasible solution is found.  These software packages also solve SOS
synthesis problems where some of the coefficients of the polynomial
are treated as free variables to be computed as part of the
optimization.  These more general SOS optimization problems are
discussed further in Section~\ref{sec:sossimp}.  Many analysis problems
for polynomial dynamical systems can be posed within this SOS
synthesis framework \cite{parrilo00,parrilo03,tan06,topcu08b}.

\section{Newton Polytope}
\label{sec:newton}

As discussed in the previous section, the search for an SOS
decomposition is equivalent to an LMI feasibility problem.  One issue
is that the computational complexity of this LMI feasibility problem
grows with the dimension of the Gram matrix.  For a polynomial of
degree $2d$ in $n$ variables there are, in general, $l_z =
\nCk{n+d}{d}$ monomials in $z$ and the Gram matrix $Q$ is $l_z \times
l_z$.  $l_z$ grows rapidly with both the number of variables and the
degree of the polynomial.  However, any particular polynomial $p$ may
have an SOS decomposition with fewer monomials. The Newton Polytope
\cite{reznick78,sturmfels98} is an algorithm to reduce the dimension
$l_z$ by pruning unnecessary monomials from $z$.

First, some terminology is provided regarding polytopes
\cite{bronsted83,grunbaum03}.  For any set $\cA \subseteq\R^n$,
$\ch(A)$ denotes the convex hull of $\cA$. Let $C \subseteq \R^n$ be a
convex set.  A point $\alpha \in C$ is an extreme point if it does not
belong to the relative interior of any segment $[\alpha_1,\alpha_2]
\subset C$. In other words, if $\exists \alpha_1, \alpha_2 \in C$ and
$0 < \lambda < 1$ such that $\alpha=\lambda \alpha_1 +
(1-\lambda)\alpha_2$ then $\alpha_1 = \alpha_2 = \alpha$.  A convex
polytope (or simply polytope) is the convex hull of a non-empty,
finite set $\{\alpha_1,\ldots,\alpha_p\} \subseteq \R^n$.  The extreme
points of a polytope are called the vertices.  Let $C$ be a polytope
and let $\cV$ be the (finite) set of vertices of $C$.  Then
$C=\ch(\cV)$ and $\cV$ is a minimal vertex representation of $C$.  The
polytope $C$ may be equivalently described as an intersection of a
finite collection of halfspaces, i.e. there exists a matrix $H\in
\R^{N\times n}$ and a vector $g\in\R^N$ such that $C=\{ \alpha\in \R^n
\ : \ H\alpha \le g\}$.  This is a facet or half-space representation
of $C$.

The \underline{Newton Polytope} (or cage) of a polynomial
$p = \sum_{\alpha \in \cA} c_\alpha x^\alpha$ is defined as $C(p):=
\ch( \cA )\subseteq \R^n$
\cite{reznick78}.  The reduced Newton polytope is $\frac{1}{2}C(p):=\{
\frac{1}{2} \alpha \ : \ \alpha \in C(p) \}$. The following theorem
from \cite{reznick78} is a key result for monomial reduction.
\footnote{ A polynomial $p$ is a \underline{form} if all monomials
  have the same degree.  The results in \cite{reznick78} are stated
  and proved for forms.  A given polynomial can be converted to a form
  by adding a single dummy variable of appropriate degree to each
  monomial. The results in \cite{reznick78} apply to polynomials by
  this homogenization procedure.}

\vspace{0.1in}
\begin{theorem}
\label{thm:newton}
If $p = \sum_{i=1}^m f_i^2$ then the vertices of $C(p)$ are vectors
whose entries are even numbers and $C(f_i) \subseteq \frac{1}{2}
C(p)$.
\end{theorem}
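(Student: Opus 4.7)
The plan is to prove both claims at once by establishing the set identity
\begin{align*}
C(p) \;=\; 2\,\ch\!\left( \bigcup_{i=1}^m C(f_i) \right),
\end{align*}
after which the theorem follows readily. The forward inclusion $C(p) \subseteq 2\,\ch(\bigcup_i C(f_i))$ is immediate: for any $\alpha$ in the support of $p = \sum_i f_i^2$, the coefficient of $x^\alpha$ cannot vanish, so there must exist an index $i$ and exponents $\beta,\gamma \in \mbox{supp}(f_i)$ with $\beta+\gamma=\alpha$, giving $\alpha = 2\cdot\tfrac{\beta+\gamma}{2} \in 2\,C(f_i)$. Taking convex hulls yields the inclusion.

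For the reverse inclusion I would use a support-function argument. For $c \in \R^n$, let $M(c) := \max_{\alpha \in \bigcup_i \mbox{supp}(f_i)} \langle c,\alpha\rangle$ with maximizer $\alpha^*$. On the open dense subset of $c$ for which this maximizer is unique, any pair $\beta,\gamma$ lying in a common $\mbox{supp}(f_j)$ with $\beta+\gamma = 2\alpha^*$ must satisfy $\langle c,\beta\rangle = \langle c,\gamma\rangle = M(c)$, hence $\beta=\gamma=\alpha^*$ by uniqueness. The coefficient of $x^{2\alpha^*}$ in $p$ therefore collapses to $\sum_{i:\alpha^*\in\mbox{supp}(f_i)} a_{i,\alpha^*}^2 > 0$, placing $2\alpha^*$ in $\mbox{supp}(p)$. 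This forces the support function of $C(p)$ at $c$ to be at least $2M(c)$; combined with the upper bound from the forward inclusion, equality of support functions holds on a dense set of $c$, and continuity (each support function being a pointwise maximum of finitely many linear functionals) extends the equality to all of $\R^n$. Since compact convex sets are determined by their support functions, the desired set identity follows.

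Given the identity, the containment $C(f_i) \subseteq \ch(\bigcup_j C(f_j)) = \tfrac{1}{2}C(p)$ is immediate. For the vertex claim: a vertex $\alpha^*$ of $C(p)$ is an extreme point, so $\alpha^*/2$ is extreme in $\ch(\bigcup_i C(f_i))$; an extreme point of the convex hull of a finite union of compact convex sets must lie in one of those sets and be extreme there, so $\alpha^*/2$ is a vertex of some $C(f_i)$ and hence lies in $\mbox{supp}(f_i) \subseteq \N^n$. Therefore $\alpha^*/2$ has nonnegative integer entries and $\alpha^*$ has even entries. The main obstacle I anticipate is making the ``generic $c$'' step precise — in particular, verifying that the set of $c$ with a unique maximizer is open and dense (its complement sits inside a finite union of hyperplanes indexed by pairs of distinct support points) and correctly patching the non-generic case via continuity of support functions. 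Everything else reduces to standard convex analysis facts about extreme points of convex hulls.
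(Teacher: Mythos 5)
Your proof is correct, but note that the paper itself gives no proof of this theorem: it is quoted from Reznick's work \cite{reznick78} (with the homogenization remark in the footnote), so there is nothing in the paper to compare against line by line. What you have written is a complete, self-contained argument that in fact establishes the stronger identity $\frac{1}{2}C(p)=\ch\bigl(\bigcup_{i} C(f_i)\bigr)$, which is Reznick's original theorem and immediately implies both claims of the statement; your route (forward inclusion from the support of $p$, reverse inclusion via support functions and the no-cancellation of the diagonal terms $a_{i,\alpha^*}^2$ at a generically unique maximizer, then the extreme-point argument for the evenness of vertices) is essentially the classical argument. The key steps all check out: the genericity set is indeed the complement of finitely many hyperplanes $\{c:\langle c,\beta-\gamma\rangle=0\}$ over distinct support points, so it is open and dense; both support functions are maxima of finitely many linear functionals, hence continuous, so density suffices; and an extreme point of the convex hull of finitely many polytopes lies in one of them and is extreme there, hence lies in $\mathrm{supp}(f_i)\subseteq\N^n$, giving even entries for the vertex of $C(p)$. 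Two cosmetic points only: you should dispose of the degenerate case $p\equiv 0$ (then all $f_i\equiv 0$ and both sides are empty), and it is worth stating explicitly that the reverse-inclusion step is where the real sum-of-squares structure enters, since the surviving coefficient is a nonempty sum of squares $\sum_i a_{i,\alpha^*}^2>0$ and cannot cancel — this is precisely the point that fails for general polynomial identities.
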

\vspace{0.1in}

This theorem implies that any monomial $x^\alpha$ appearing in the
vector $z$ of an SOS decomposition $z^T Q z$ must satisfy $\alpha \in
\frac{1}{2}C(p) \cap \N^n$.  This forms the basis for the Newton
polytope method for pruning monomials: Let $p$ be a given polynomial
of degree $2d$ in $n$ variables with monomial degree vectors specified
by the finite set $\cA$. First, create the $l_z\times 1$ vector $z$
consisting of all monomials of degree $\le d$ in $n$ variables. There
are $l_z = \nCk{n+d}{d}$ monomials in this complete list. Second,
compute a half-space representation $\{ \alpha\in \R^n \ : \ H\alpha
\le g\}$ for the reduced Newton polytope $\frac{1}{2}C(p)$.  Third,
prune out any monomials in $z$ that are not elements of
$\frac{1}{2}C(p)$.  This algorithm is implemented in SOSTOOLs
\cite{sostools04}.  The third step amounts to checking each monomial
in $z$ to see if the corresponding degree vector satisfies the
half-plane constraints $H\alpha \le g$.  This step is computationally
very fast.  The second step requires computing a half-plane
representation for the convex hull of $\frac{1}{2} \cA$.  This can be
done in Matlab, e.g.  with \texttt{convhulln}.  However, this step can
be time-consuming when the polynomial has many terms ($\cA$ has many
elements). The next section provides an alternative implementation of
the Newton Polytope algorithm that avoids constructing the half-space
representation of the reduced Newton polytope.

\vspace{0.1in}
\underline{\emph{Example:}} Consider the following polynomial
\begin{align}
\label{eq:polyex}
p = 3x_1^4 - 2x_1^2 x_2 + 7x_1^2 - 4x_1x_2 + 4x_2^2 + 1
\end{align}
$p$ is a degree four polynomial in two variables. The list of
all monomials in two variables with degree $\le 2$ is:
\begin{align}
\label{eq:fullz}
z = \bmtx 1 & x_1 & x_2 & x_1^2 & x_1 x_2 & x_2^2 \emtx^T
\end{align}
The length of $z$ is $l_z = 6$. An SOS decomposition of a degree four
polynomial would, in general, include all six of these monomials. The
Newton Polytope can be used to prune some unnecessary monomials in
this list.

The set of monomial degree vectors for $p$ is $\cA:=\{ \bsmtx 4 \\ 0
\esmtx, \ \bsmtx 2 \\ 1 \esmtx, \ \bsmtx 2 \\ 0 \esmtx, \ \bsmtx 1 \\
1 \esmtx, \ \bsmtx 0 \\ 2 \esmtx, \ \bsmtx 0 \\ 0 \esmtx \}$.  These
vectors are shown as circles in Figure~\ref{fig:newton}.  The Newton
Polytope $C(p)$ is the large triangle with vertices $\{ \bsmtx 4 \\ 0
\esmtx, \ \bsmtx 0 \\ 0 \esmtx, \ \bsmtx 0 \\ 2 \esmtx\}$.
Figure~\ref{fig:rednewton} shows the degree vectors for the six
monomials in $z$ (circles) and the reduced Newton polytope (large
triangle).  The reduced Newton polytope $\frac{1}{2} C(f)$ is the
triangle with vertices $\{ \bsmtx 2 \\ 0 \esmtx, \ \bsmtx 0 \\ 0
\esmtx, \ \bsmtx 0 \\ 1 \esmtx \}$.  By Theorem~\ref{thm:newton},
$x_1x_2$ and $x_2^2$ can not appear in any SOS decomposition of $p$
because $\bsmtx 1 \\ 1 \esmtx, \bsmtx 0 \\ 2 \esmtx \notin\frac{1}{2}
C(f)$. These monomials can be pruned from $z$ and the search for an
SOS decomposition can be performed using only the four monomials in
the reduced Newton polytope:
\begin{align}
  z = \bmtx 1 & x_1 & x_2 & x_1^2 \emtx^T
\end{align}
The length of the reduced vector $z$ is $l_z = 4$.  The SOS feasibility
problem with this reduced vector $z$ (Equation~\ref{eq:sosfeas}) is
feasible.  The following matrix is one feasible solution:
\begin{align}
  Q = \bsmtx 1 & 0 & 0 & 0 \\ 0 & 7 & -2 & 0 \\ 
            0 & -2 & 4 & -1 \\ 0 & 0 & -1 & 3 \esmtx
\end{align}
$p$ is SOS since $p=z^TQz$ and $Q\succeq 0$.

\begin{figure}[h]
\centerline{\psfig{figure=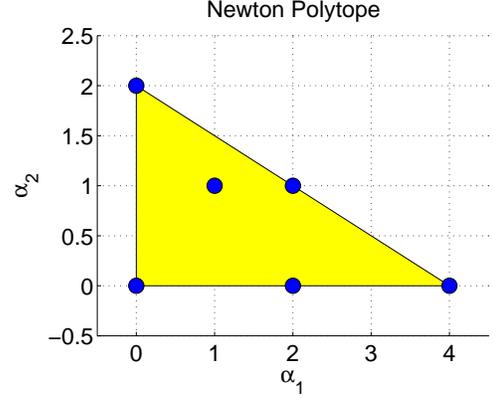,width=2.8in,angle=0}}
\caption{Newton polytope (large triangle) and monomial degree vectors (circles)}
\label{fig:newton}
\end{figure}

\begin{figure}[h]
\centerline{\psfig{figure=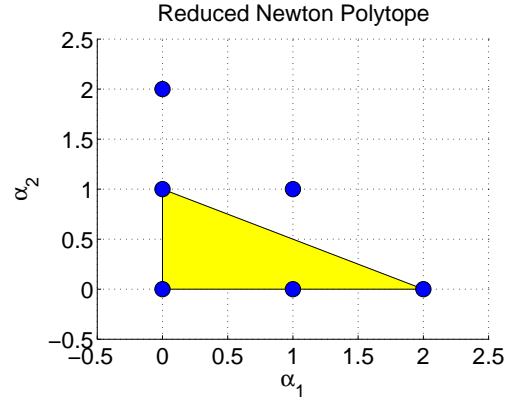,width=2.8in,angle=0}}
\caption{Reduced Newton polytope (large triangle) and degree vectors for all monomials of degree = 0, 1, 2 (circles)}
\label{fig:rednewton}
\end{figure}

\section{Zero Diagonal Algorithm}
\label{sec:zda}

The zero diagonal algorithm searches for diagonal entries of the Gram
matrix that are constrained to be zero and then prunes the associated
monomials from $z$.  The remainder of the section describes this
algorithm in more detail.

As mentioned in Section~\ref{sec:sospoly}, equating the coefficients
of $p$ and $z^TQz$ leads to linear equality constraints on the entries
of $Q$. The structure of these equations plays an important role in
the proposed algorithm. Let $z$ be the $l_z\times 1$ vector of all
monomials in $n$ variables of degree $\le d$
(Equation~\ref{eq:monomvec}). Define the corresponding set of degree
vectors as $M := \{ \alpha_1,\ldots, \alpha_{l_z}\} \subseteq \N^n$.
$z^TQz$ is a polynomial in $x$ with coefficients that are linear
functions of the entries of $Q$:
\begin{align}
\label{eq:zQz1}
  z^TQz = \sum_{i=1}^m \sum_{j=1}^m Q_{i,j}
  x^{\alpha_i+\alpha_j}
\end{align}
The entries of $z$ are not independent: it is possible that $z_i z_j =
z_k z_l$ for some $i,j,k,l \in \{1,\ldots,l_z\}$.  The unique degree
vectors in Equation~\ref{eq:zQz1} are given by the set
\begin{align}
M+M:=\{\alpha \in \N^n : \exists \alpha_i,\alpha_j \in M \mbox{ s.t. }
   \alpha = \alpha_i + \alpha_j \}
\end{align}
The polynomial $z^TQz$ can be rewritten as:
\begin{align}
  z^TQz = \sum_{\alpha \in M+M}
     \left( \sum_{(i,j)\in S_\alpha} Q_{i,j} \right) x^\alpha
\end{align}
where $S_\alpha := \{ (i,j) \ : \ \alpha_i+\alpha_j = \alpha \}$.
Equating the coefficients of $p$ and $z^TQz$ yields the following
linear equality constraints on the entries of $Q$:
\begin{align}
\label{eq:Qeqs}
\sum_{(i,j)\in S_\alpha} Q_{i,j} =  
\left\{
\begin{array}{lc}
c_\alpha & \alpha \in \cA \\
0        & \alpha \notin \cA
\end{array}
\right.
\end{align}
There exists $A\in\R^{l\times l_z^2}$ and $b\in \R^l$ such that these
equality constraints are given by $Aq=b$ \footnote{In addition to the
  equality constraints due to $p=z^TQz$ there are also equality
  constraints due to the symmetry condition $Q=Q^T$. Some solvers,
  e.g. Sedumi \cite{sedumi99}, internally handle these symmetry
  constraints.} where $q:=vec(Q)$ is the vector obtained by vertically
stacking the columns of $Q$. The dimension $l$ is equal to the number
of elements of $M+M$. 

The zero diagonal algorithm is based on two lemmas.

\vspace{0.1in}
\begin{lemma}
\label{lem:S2alpha}
If $S_{2\alpha_i} = \{(i,i)\}$ then
\begin{align}
  \label{eq:Qiieq}
  Q_{i,i} =
  \left\{
    \begin{array}{lc}
      c_{2\alpha_i} & 2\alpha_i \in \cA \\
      0        & 2\alpha_i \notin \cA
    \end{array}
  \right.
\end{align}
\end{lemma}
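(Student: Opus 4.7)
The plan is to obtain the lemma as a direct specialization of the coefficient-matching identity in Equation~\ref{eq:Qeqs}. That equation holds for every $\alpha \in M+M$, so in particular it applies to the choice $\alpha = 2\alpha_i$, which lies in $M+M$ because $\alpha_i \in M$. With this choice, the left-hand side of Equation~\ref{eq:Qeqs} is $\sum_{(j,k) \in S_{2\alpha_i}} Q_{j,k}$.

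Next, I would invoke the hypothesis $S_{2\alpha_i} = \{(i,i)\}$ to collapse that sum. The hypothesis says that, among all pairs $(j,k)$ of monomial indices, the only one whose degree vectors add to $2\alpha_i$ is $(i,i)$ itself; equivalently, no off-diagonal product $z_j z_k$ with $(j,k) \neq (i,i)$ contributes to the coefficient of $x^{2\alpha_i}$ in $z^T Q z$. Therefore the sum reduces to the single term $Q_{i,i}$. The right-hand side of Equation~\ref{eq:Qeqs} then gives the two claimed cases verbatim: $c_{2\alpha_i}$ when $2\alpha_i \in \cA$ and $0$ otherwise. This yields Equation~\ref{eq:Qiieq}.

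I do not expect any real obstacle; the lemma is essentially a rewriting of Equation~\ref{eq:Qeqs} under a hypothesis that isolates one diagonal entry of $Q$. The only step worth making explicit is the interpretation of $S_{2\alpha_i} = \{(i,i)\}$: it is precisely the statement that the diagonal monomial $x^{\alpha_i}$ is the unique source of $x^{2\alpha_i}$ in the Gram expansion of $z^T Q z$, which is what forces the coefficient equation to become a one-variable equation in $Q_{i,i}$.
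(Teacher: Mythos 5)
Your proposal is correct and follows the same route as the paper: specialize the coefficient-matching constraints of Equation~\ref{eq:Qeqs} to $\alpha = 2\alpha_i$ and use the hypothesis $S_{2\alpha_i} = \{(i,i)\}$ (i.e., $x^{\alpha_i}\cdot x^{\alpha_i}$ is the unique decomposition of $x^{2\alpha_i}$ as a product of monomials in $z$) to collapse the sum to the single term $Q_{i,i}$. Nothing is missing.
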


\vspace{0.1in}
\begin{lemma}  
\label{lem:psos}
  If $p=z^TQz$, $Q\succeq 0$, and $Q_{i,i}=0$ then $p=\tilde{z}^T
  \tilde{Q} \tilde{z}$ where $\tilde{z}$ is the $(l_z-1) \times 1$
  vector obtained by deleting the $i^{th}$ element of $z$ and
  $\tilde{Q} \succeq 0$ is the $(l_z-1)\times (l_z-1)$ matrix obtained
  by deleting the $i^{th}$ row and column from $Q$.
\end{lemma}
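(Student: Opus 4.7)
The plan is to leverage the well-known structural property of positive semidefinite matrices that a zero on the diagonal forces the entire corresponding row and column to vanish. Once that is established, both claims of the lemma follow from direct bookkeeping.

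First I would show that $Q_{i,j} = 0$ for every $j$. The cleanest route is to consider the $2 \times 2$ principal submatrix indexed by $\{i,j\}$. Since $Q \succeq 0$, every principal minor is nonnegative, so
\begin{equation*}
Q_{i,i} Q_{j,j} - Q_{i,j}^2 \ge 0.
\end{equation*}
Substituting $Q_{i,i} = 0$ forces $Q_{i,j} = 0$. By symmetry of $Q$, the entire $i$-th row and column vanish. (An equivalent argument writes $Q = AA^T$ and observes that $Q_{i,i} = \|a_i\|^2 = 0$ implies row $i$ of $A$ is zero, whence row and column $i$ of $Q$ are zero.)

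Next I would use this to rewrite the quadratic form. Expanding
\begin{equation*}
z^T Q z = \sum_{j=1}^{l_z} \sum_{k=1}^{l_z} Q_{j,k} z_j z_k,
\end{equation*}
every term in which $j = i$ or $k = i$ drops out because the corresponding $Q_{j,k}$ is zero. The surviving sum is exactly $\tilde{z}^T \tilde{Q} \tilde{z}$ where $\tilde{z}$ and $\tilde{Q}$ are obtained by deleting the $i$-th element of $z$ and the $i$-th row and column of $Q$ respectively. Finally, since $\tilde{Q}$ is a principal submatrix of a positive semidefinite matrix, it is itself positive semidefinite, so $\tilde{Q} \succeq 0$, completing both conclusions.

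There is really no substantive obstacle here; the only subtle point worth flagging is the appeal to the PSD principal-minor property, which is the standard fact underlying the whole zero diagonal algorithm and is cited informally in the preceding paragraph. Everything else is an exercise in restricting the indexing set of a double sum.
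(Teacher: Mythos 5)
Your proof is correct and follows essentially the same route as the paper, which derives the lemma from the same fact that $Q\succeq 0$ and $Q_{i,i}=0$ force the entire $i$-th row and column of $Q$ to vanish, so the quadratic form collapses to $\tilde{z}^T\tilde{Q}\tilde{z}$ with $\tilde{Q}\succeq 0$ as a principal submatrix. Your write-up simply fills in the $2\times 2$ principal-minor (or $Q=AA^T$) justification that the paper states without proof.
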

\vspace{0.1in}

Lemma~\ref{lem:S2alpha} follows from Equation~\ref{eq:Qeqs}.
$S_{2\alpha_i} = \{(i,i)\}$ means that $x^{\alpha_i} \cdot
x^{\alpha_i}$ is the unique decomposition of $x^{2\alpha_i}$ as a
product of monomials in $z$.  There is no other decomposition of
$x^{2\alpha_i}$ as a product of monomials in $z$.  In this case,
$p=z^TQz$ places a direct constraint on $Q_{i,i}$ that must hold for
all possible Gram matrices.

Lemma~\ref{lem:psos} follows from a simple property of positive
semidefinite matrices: If $Q\succeq 0$ and $Q_{i,i} = 0$ then
$Q_{i,j}=Q_{j,i} = 0$ for $j=1,\ldots,l_z$. If $Q_{i,i}=0$ then
an SOS decomposition of $p$, if one exists, does not depend on the
monomial $z_i$ and $z_i$ can be removed from $z$. 

The zero diagonal algorithm is given in Table~\ref{tab:zda}.  The sets
$M_k$ denote the pruned list of monomial degree vectors at the
$k^{th}$ iterate.  The main step in the iteration is the search for
equations that directly constrain a diagonal entry $Q_{i,i}$ to be
zero (Step 6).  This step can be performed very fast since it can be
implemented using the $\texttt{find}$ command in Matlab.  Based on
Lemma~\ref{lem:psos}, if $Q_{i,i}=0$ then the monomial $z_i$ and the
$i^{th}$ row and column of $Q$ can be removed.  This is equivalent to
zeroing out the corresponding columns of $A$ (Step 7).  This
implementation has the advantage that $A$ and $b$ do not need to be
recomputed for each updated set $M_k$.  Zeroing out columns of $A$ in
Step 7 also means that new equations of the form $Q_{i,i}=0$ may be
uncovered during the next iteration.  The iteration continues until no
new zero diagonal entries of $Q$ are discovered.  The next theorem
proves that if $p$ is a SOS then the decomposition must be expressible
using only monomials associated with the final set $M_{k_f}$.
Moreover, $M_{k_f} \subseteq \frac{1}{2}C(p) \cap \N^n$, i.e.  the
list of monomials returned by the zero diagonal algorithm is never
larger than the list obtained from the Newton polytope method.  In
fact, there are polynomials for which the zero diagonal algorithm
returns a strictly smaller list of monomials than the Newton polytope.
The second example below provides an instance of this fact.

\begin{table}[h]
\begin{tabbing}
1. \= {\tt Given:} \= A polynomial $p = \sum_{\alpha \in \cA} 
c_\alpha x^\alpha$. \\
2. \> {\tt Initialization:} Set $k=0$ and 
   $M_0:=\{\alpha_i\}_{i=1}^{l_z} \subseteq \N^n$ \\
3. \> {\tt Form $Aq=b$:} Construct the equality constraint
   data, $A\in\R^{l\times l_z^2}$ and \\
   \>\> $b\in\R^l$, obtained by
   equating coefficients of $p=z^TQz$. \\
4. \> {\tt Iteration:} \\
5. \> \> Set $\cZ = \emptyset$, $k:=k+1$, and $M_k:=M_{k-1}$ \\
6. \> \> Search $Aq=b$: If there is an  equation of the form 
     $Q_{i,i} = 0$ \\
   \>\> then set $M_k:=$ $M_k\backslash \{\alpha_i\}$ and 
     $\cZ = \cZ \cup \cI$ where $\cI$ are the \\
   \>\> entries of $q$ corresponding to the $i^{th}$ row and column of $Q$. \\
7. \> \> For each $j\in \cZ$ set the $j^{th}$ column of $A$ equal to
    zero.   \\
8. \> \> Terminate if $\cZ = \emptyset$ otherwise return to step 5.\\
9. \> {\tt Return:}\ $M_k$, $A$, $b$
\end{tabbing}
\caption{Monomial Reduction using the Zero Diagonal Algorithm}
\label{tab:zda}
\vspace{-0.3in}
\end{table}

\vspace{0.1in}
\begin{theorem}
\label{thm:zda}
The zero diagonal algorithm terminates in a finite number of steps,
$k_f$, and $M_{k_f} \subseteq \frac{1}{2}C(p) \cap \N^n$. Moreover,
if $p = \sum_{i=1}^m f_i^2$ then $C(f_i) \cap \N^n \subseteq M_{k_f}$.
\end{theorem}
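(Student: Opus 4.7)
Termination follows immediately from the loop structure. Each iteration either leaves $\cZ$ empty, in which case the algorithm returns, or adds at least one new index to $\cZ$ and strictly decreases $|M_k|$. Since $|M_0|=l_z$ is finite, the loop runs at most $l_z$ times.

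For the inclusion $M_{k_f} \subseteq \tfrac{1}{2}C(p)\cap\N^n$, the plan is a contradiction argument via extreme points in a Minkowski sum. Suppose some $\tilde\alpha \in M_{k_f}$ lies outside $\tfrac{1}{2}C(p)$. Since $\tfrac{1}{2}C(p)$ is convex, a standard convexity argument (every point of a finite set $S$ is a convex combination of extreme points of $\mathrm{conv}(S)$, so some such extreme point must lie outside any convex set $C$ that fails to contain $S$) lets me select $\tilde\alpha$ to be additionally an extreme point of $\mathrm{conv}(M_{k_f})$. Then any decomposition $2\tilde\alpha = \alpha_j + \alpha_k$ with $\alpha_j, \alpha_k \in M_{k_f}$ expresses $\tilde\alpha$ as the midpoint of two points in $\mathrm{conv}(M_{k_f})$, and extremeness forces $\alpha_j = \alpha_k = \tilde\alpha$. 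Meanwhile $\tilde\alpha \notin \tfrac{1}{2}C(p)$ gives $2\tilde\alpha \notin C(p) \supseteq \cA$. After the column-zeroing performed by Step~7 for every pruned monomial, the row of $A$ indexed by $2\tilde\alpha$ collapses into the single equation $Q_{i^*, i^*} = 0$, where $\alpha_{i^*}=\tilde\alpha$. Step~6 would then discover this at iteration $k_f+1$ and prune $\tilde\alpha$, contradicting termination at $k_f$.

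For $C(f_i)\cap\N^n \subseteq M_{k_f}$, the plan is induction on $k$ with the invariant that any SOS decomposition $p=\sum_j f_j^2$ can be expressed using only monomials indexed by $M_k$. The base case $k=0$ holds because $C(f_j) \subseteq \tfrac{1}{2}C(p)$ by Theorem~\ref{thm:newton}, so every monomial of every $f_j$ has total degree at most $d$ and belongs to $M_0$. For the inductive step, when the algorithm prunes index $i$ at iteration $k+1$, it is because the current (pruned) equations $Aq=b$ force $Q_{i,i}=0$; by the inductive hypothesis, the Gram matrix $Q=AA^T$ arising from the assumed decomposition has zeros in all rows and columns corresponding to already-pruned indices, so $Q$ is consistent with the pruned equations and must satisfy $Q_{i,i} = \sum_j ((f_j)_{\alpha_i})^2 = 0$. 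This forces $(f_j)_{\alpha_i}=0$ for every $j$, and Lemma~\ref{lem:psos} then rewrites the decomposition over the smaller Gram block indexed by $M_{k+1}$, preserving the invariant.

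The step I expect to be most delicate is closing the gap between the invariant that the support of every $f_j$ sits inside $M_{k_f}$, which the induction above delivers directly, and the literally stronger claim $C(f_i)\cap\N^n \subseteq M_{k_f}$ in the theorem statement. Bridging that gap requires an auxiliary lattice-point argument relating the Newton polytope $C(f_i)$ to the Minkowski-sum structure $M_k+M_k$, and that is where I expect the bulk of the careful work in a complete write-up would be concentrated.
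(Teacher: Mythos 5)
Your termination argument and your proof that $M_{k_f} \subseteq \tfrac{1}{2}C(p)\cap\N^n$ are essentially the paper's: the paper argues directly that every vertex of $\ch(M_{k_f})$ satisfies $S_{2\alpha_i}=\{(i,i)\}$ (relative to the surviving indices) and so must have $2\alpha_i\in\cA$, while you run the same vertex/midpoint argument in contrapositive form; these are the same proof. Likewise, your induction for the last claim — each pruned index has its Gram diagonal forced to zero by equations that the particular Gram matrix $Q=AA^T$ of the assumed decomposition satisfies, hence the pruned monomial appears in no $f_j$, hence $\mathrm{supp}(f_j)\subseteq M_{k_f}$ — is exactly the paper's induction, and your version is if anything more explicit about why that particular $Q$ obeys the pruned equations.

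The one step you leave open, passing from $\mathrm{supp}(f_j)\subseteq M_{k_f}$ to the literal claim $C(f_j)\cap\N^n\subseteq M_{k_f}$, is precisely where the paper also stops short: its proof closes the gap in one line (``the monomial $z_i$ cannot appear in any $f_i$, hence $C(f_i)\cap\N^n\subseteq M_k$''), silently identifying the lattice points of the Newton polytope $C(f_j)=\ch(\mathrm{supp}(f_j))$ with the support itself. The auxiliary lattice-point argument you hoped would bridge this does not exist, because the stronger claim is false. Take $f=1+x_1^2x_2+x_1x_2^2$ and $p=f^2$, so $d=3$ and $M_0$ is all exponents of degree $\le 3$. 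Iteration 1 pins $Q_{(3,0),(3,0)}=0$ and $Q_{(0,3),(0,3)}=0$; the next iterations successively pin the diagonals for $(2,0),(0,2)$, then $(1,0),(0,1)$, and finally $(1,1)$, so the algorithm returns $M_{k_f}=\{(0,0),(2,1),(1,2)\}$ (which is sound: $p=z^TQz$ with $z=(1,\ x_1^2x_2,\ x_1x_2^2)^T$ and $Q$ the all-ones matrix). Yet $(1,1)$ is the centroid of $C(f)=\ch\{(0,0),(2,1),(1,2)\}$, so $C(f)\cap\N^2\not\subseteq M_{k_f}$. In other words, what your induction actually delivers — every SOS decomposition of $p$ is expressible using only the monomials indexed by $M_{k_f}$ — is the correct conclusion and all that the algorithm's soundness (and the comparison with the Newton polytope, which concerns $M_{k_f}\subseteq\tfrac{1}{2}C(p)\cap\N^n$) requires; the remaining distance to the theorem's wording should be resolved by weakening the statement to the support inclusion, not by further work on your side.
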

\begin{proof}
  $M_0$ has $l_z$ elements. The algorithm terminates unless at
  least one point is removed from $M_k$. Thus the algorithm must
  terminate after $k_f \le l_z+1$ steps.
                                 
  To show $M_{k_f} \subseteq \frac{1}{2}C(p) \cap \N^n$ consider a
  vertex $\alpha_i$ of $\ch(M_{k_f})$.   If there exists $u,v \in
  \ch(M_{k_f})$ such that $2\alpha_i = u+v$ then $u=v=\alpha_i$.  This
  follows from $\alpha_i = \frac{1}{2} (u+v)$ and the definition of a
  vertex. As a consequence, $S_{2\alpha_i} = \{ (i,i) \}$. By
  Lemma~\ref{lem:S2alpha}
  \begin{align}  
    Q_{i,i} =
    \left\{
      \begin{array}{lc}
        c_{2\alpha_i} & 2\alpha_i \in \cA \\
        0        & 2\alpha_i \notin \cA
      \end{array}
    \right.
  \end{align}
  $Q_{i,i} \ne 0$ since $\alpha_i$ was not removed at step 6 during
  the final iteration and thus $2\alpha_i \in \cA \subseteq C(p)$.
  This implies that $\alpha_i \in \frac{1}{2} C(p)$, i.e.
  $\frac{1}{2} C(p)$ contains all vertices of $\ch(M_{k_f})$. Hence
  $M_{k_f} \subseteq \ch(M_{k_f}) \subseteq \frac{1}{2}C(p)$.
  
  Finally it is shown that $C(f_i) \cap \N^n \subseteq M_{k_f}$.
  $C(f_i) \subseteq \frac{1}{2} C(p)$ by Theorem~\ref{thm:newton} and
  $\frac{1}{2}C(p) \subseteq \ch(M_0)$ by the choice of $M_0$.  Thus
  $C(f_i) \cap \N^n \subseteq M_0$.  Let $z$ be the vector of
  monomials associated with $M_0$.  If $p = \sum_{i=1}^m f_i^2$ then
  there exists a $Q\succeq 0$ such that $p=z^TQz$.  If the iteration
  removes no degree vectors then $M_{k_f} = M_0$ and the proof is
  complete.  Assume the iteration removes at least one degree vector
  and let $\alpha_i$ be the first removed degree vector.  Based on
  Step 6, $p=z^TQz$ constrains $Q_{i,i}=0$. By Lemma~\ref{lem:psos}
  the monomial $z_i$ cannot appear in any $f_i$.  Hence $C(f_i) \cap
  \N^n \subseteq M_{0} \backslash \{ \alpha_i \}$.  Induction can be
  used to show $C(f_i) \cap \N^n \subseteq M_{k}$ holds after each
  step $k$ including the final step $k_f$.
\end{proof}
\vspace{0.1in}

This algorithm is currently implemented in SOSOPT
\cite{nasaworkshop09}.  The results in Theorem~\ref{thm:zda} still
hold if $M_0\subseteq \N^n$ is chosen to be any set satisfying
$\frac{1}{2} C(p) \cap \N^n \subseteq M_0$.  Simple heuristics can be
used to obtain an initial set of monomials $M_0$ with fewer than
$l_z$ elements.  $M_0$ can then be used to initialize the zero diagonal
algorithm.  The next step is to construct the matrix $A$ and vector
$b$ obtained by equating the coefficients of $p$ and $z^TQz$.  This
step is required to formulate the LMI feasibility problem and it is
not an additional computational cost associated with the zero diagonal
algorithm.  $M_{k_f}$ contains the final reduced set of monomial degree
vectors.  If at least one degree vector was pruned then the returned
matrix $A$ and vector $b$ may contain entire columns or rows of zeros.
These rows/and columns can be deleted prior to passing the data to a a
semi-definite programming solver.  The next two examples demonstrate
the basic ideas of the algorithm.

\vspace{0.1in}
\underline{\emph{Example:}} Consider again the polynomial in
Equation~\ref{eq:polyex}. The full list of all monomials in two
variables with degree $\le 2$ consists of six monomials
(Equation~\ref{eq:fullz}).  Equating the coefficients of $p$ and
$z^TQz$ yields the following linear equality constraints on the
entries of $Q$: 
{\small
\begin{align*}
Q_{2,1} + Q_{1,2} &=  0, & Q_{4,1} + Q_{1,4} + Q_{2,2} &=  7 \\
Q_{4,2} + Q_{2,4} &=  0, & Q_{6,4} + Q_{4,6} + Q_{5,5} &=  0 \\
Q_{3,1} + Q_{1,3} &=  0, & Q_{6,1} + Q_{1,6} + Q_{3,3} &=  4 \\
Q_{5,4} + Q_{4,5} &=  0,  & Q_{5,2} + Q_{2,5} + Q_{4,3} + Q_{3,4} &= -2 \\
Q_{6,3} + Q_{3,6} &=  0,  & Q_{6,2} + Q_{2,6} + Q_{5,3} + Q_{3,5} &=  0 \\
Q_{6,5} + Q_{5,6} &=  0, & Q_{5,1} + Q_{1,5} + Q_{3,2} + Q_{2,3} &= -4 \\
Q_{1,1} & = 1     & Q_{4,4} &=  3 \\  
Q_{6,6} &=  0 
\end{align*}}
A matrix $A$ and vector $b$ can be constructed to represent these
equations in the form $Aq=b$.  Note that $Q_{6,6}=0$ and this implies
that $Q_{i,6} = Q_{6,i}=0$ $i=1,\ldots,6$ for any SOS decomposition of
$p$.  Thus the monomial $z_6 = x_2^2$ can not appear in any SOS
decomposition and it can be removed from the list.  After eliminating
$x_2^2$ and removing the $6^{th}$ row and column of $Q$, the equality
constraints reduce to:
{\small
\begin{align*}
Q_{2,1} + Q_{1,2} &=  0, & Q_{4,1} + Q_{1,4} + Q_{2,2} &=  7 \\
Q_{4,2} + Q_{2,4} &=  0, & Q_{5,5} &=  0 \\
Q_{3,1} + Q_{1,3} &=  0, & Q_{3,3} &=  4 \\
Q_{5,4} + Q_{4,5} &=  0,  & Q_{5,2} + Q_{2,5} + Q_{4,3} + Q_{3,4} &= -2 \\
Q_{5,3} + Q_{3,5} &=  0  & Q_{5,1} + Q_{1,5} + Q_{3,2} + Q_{2,3} &= -4 \\
Q_{1,1} & = 1 & Q_{4,4} &=  3 
\end{align*}}
Removing the $6^{th}$ row and column of $Q$ is equivalent to zeroing
out the appropriate columns of the matrix $A$.  This uncovers the new
constraint $Q_{5,5} = 0$ which implies that the monomial $z_5 =
x_1x_2$ can be pruned from the list. After eliminating $x_1x_2$, the
procedure can be repeated once again after removing the $5^{th}$ row
and column of $Q$.  No new diagonal entries of $Q$ are constrained to
be zero and hence no additional monomials can be pruned from $z$.  The
final list of monomials consists of four monomials.
\begin{align}
\label{eq:z4}
z = \bmtx 1 & x_1 & x_2 & x_1^2 \emtx^T
\end{align}
The Newton polytope method returned the same list.

\vspace{0.1in} 
\underline{\emph{Example:}} Consider the polynomial $p=x_1^2 + x_2^2 +
x_1^4 x_2^4$.  The Newton polytope is $C(p) = \ch( \{ \bsmtx 2 \\ 0
\esmtx, \ \bsmtx 0 \\ 2 \esmtx, \ \bsmtx 4 \\ 4 \esmtx \} )$. The
reduced Newton polytope is $\frac{1}{2} C(p) = \ch( \{ \bsmtx 1 \\ 0
\esmtx, \ \bsmtx 0 \\ 1 \esmtx, \ \bsmtx 2 \\ 2 \esmtx \} )$. The
monomial vector corresponding to $\frac{1}{2}C(p) \cap \N^n$ is:
\begin{align}
\label{eq:z4NP}
z:=\bmtx x_1, \ x_2, \ x_1 x_2, \ x_1^2 x_2^2 \emtx^T
\end{align}
There are $l_z = 15$ monomials in two variables with degree $\le 4$.
For simplicity, assume the zero diagonal algorithm is initialized with
$M_0 := \frac{1}{2} C(p) \cap \N^n$.  Equating coefficients of $p$ and
$z^TQz$ yields the constraint $Q_{3,3} = 0$ in the first iteration of
the zero diagonal algorithm.  The monomial $z_3 = x_1 x_2$ is pruned
and no additional monomials are removed at the next iteration. The
zero diagonal algorithm returns $M_2 = \{ \bsmtx 1 \\ 0 \esmtx, \ 
\bsmtx 0 \\ 1 \esmtx, \ \ \bsmtx 2 \\ 2 \esmtx \} $.  $M_2$ is a
proper subset of $\frac{1}{2} C(p) \cap \N^n$.  The same set of
monomials is returned by the zero diagonal algorithm after 13 steps if
$M_0$ is initialized with the $l_z=15$ degree vectors corresponding to
all possible monomials in two variables with degree $\le 4$. This
example demonstrates that the zero diagonal algorithm can return a
strictly smaller set of monomials than the Newton polytope method.

%
\section{Simplification Method for SOS Programs}
\label{sec:sossimp}

This section describes a simplification method for SOS programs that is
based on the zero diagonal algorithm.  A sum-of-squares program is an
optimization problem with a linear cost and affine SOS constraints on
the decision variables \cite{sostools04}:
\begin{align}
\label{eq:sosprog}
& \min_{\dv\in\R^r} c^T \dv\\
\nonumber
& \mbox{subject to: } a_k(x,\dv) \in \sos, \ \ k=1,\ldots N
\end{align}
$\dv\in \R^r$ are decision variables.  The polynomials $\{ a_{k}
\}_{k=1}^N$ are given problem data and are affine in $\dv$:
\begin{align}
a_k(x,\dv):=a_{k,0}(x) + a_{k,1}(x)\dv_1 + \dots + a_{k,r}(x)\dv_r
\end{align}

Theorem~\ref{thm:gram} is used to convert an SOS program into a
semidefinite program (SDP).  The constraint $a_{k}(x,\dv) \in \sos$
can be equivalently written as:
\begin{align} 
\label{eq:sos_eq} 
a_{k,0}(x) + a_{k,1}(x)\dv_1 + \dots + a_{k,r}(x)\dv_r = z_k^T Q_k z_k \\ 
\label{eq:sos_lmi} 
Q_k \succeq 0
\end{align}
If $\max_\dv [\deg a_k(x,\dv)] = 2d$ then, in general, $z_k$ must
contain all monomials in $n$ variables of degree $\le d$.  $Q_k$ is a
new matrix of decision variables that is introduced when converting an
SOS constraint to an LMI constraint.  Equating the coefficients of
$z_k^TQ_kz_k$ and $a_{k}(x,\dv)$ imposes linear equality constraints
on the decision variables $\dv$ and $Q_k$.  There exists a matrix
$A\in\R^{l\times m}$ and vector $b\in\R^l$ such that the linear
equations for all SOS constraints are given by $Ay=b$ where
\begin{align}
\label{eq:ysdp}
  y:=[\dv^T, \ vec(Q_1)^T, \ \ldots, \ vec(Q_N)^T]^T
\end{align}
$vec(Q_k)$ denotes the vector obtained by vertically stacking the
columns of $Q_k$.  The dimension $m$ is equal to $r+\sum_{k=1}^N
m_k^2$ where $Q_k$ is $m_k \times m_k$ ($k=1,\ldots,N$).  After
introducing a Gram matrix for each constraint the SOS program can be
expressed as:
\begin{align}
\label{eq:sossdp}
& \min_{\dv \in \R^r,\{Q_k\}_{k=1}^N} c^T \dv\\
\nonumber
& \mbox{subject to: } A y = b \\
\nonumber
& Q_k \succeq 0, \ \ k=1,\ldots N
\end{align}
Equation~\ref{eq:sossdp} is an SDP expressed in Sedumi \cite{sedumi99}
primal form.  $\dv$ is a vector of free decision variables and
$\{Q_k\}_{k=1}^N$ contain decision variables that are constrained to
lie in the positive semi-definite cone. Sedumi internally handles the
symmetry constraints implied by $Q_k=Q_k^T$.

The SOS simplification procedure is a generalization of the zero
diagonal algorithm. It prunes the list of monomials used in each SOS
constraint.  It also attempts to remove free decision variables that
are implicitly constrained to be zero. Specifically, the constraints
in some SOS programs imply both $\dv_i \ge 0$ and $\dv_i \le 0$, i.e.
there is an implicit constraint that $\dv_i = 0$ for some $i$.
Appendix A.1 of \cite{tan06} provides some simple examples of how
these implicit constraints can arise in nonlinear analysis problems.
For these simple examples it is possible to discover the implicit
constraints by examination. For larger, more complicated analysis
problems it can be difficult to detect that implicit constraints
exist. The SOS simplification procedure described below automatically
uncovers some classes of implicit constraints $\dv_i=0$ and removes
these decision variables from the optimization.  This is important
because implicit constraints can cause numerical issues for SDP
solvers.  A significant reduction in computation time and improvement
in numerical accuracy has been observed when implicitly constrained
variables are removed prior to calling Sedumi.

The general SOS simplification procedure is shown in
Table~\ref{tab:sossimp}. To ease the notation the algorithm is only
shown for the case of one SOS constraint ($N=1$).  The extension to
SOS programs with multiple constraints ($N>1$) is straight-forward.
The algorithm is initialized with a finite set of vectors $M_0
\subseteq \N^n$. The Newton polytope of $a(x,\dv)$ depends on the
choice of $\dv$ so $M_0$ must be chosen so that it contains all
possible reduced Newton polytopes. One choice is to initialize $M_0$
corresponding to the degree vectors of all monomials in $n$ variables
and degree $\le 2d:=\max_\dv [\deg a_k(x,\dv)]$.  $A$ and $b$ need to
be computed when formulating the SDP so this step is not additional
computation associated with the simplification procedure.  The last
pre-processing step is the initialization of the sign vector $s$.  The
entries of $s_i$ are $+1$, $-1$, or $0$ if it can be determined from
the constraints that $y_i$ is $\ge 0$, $\le 0$ or $=0$, respectively.
$s_i=$\texttt{NaN} if no sign information can be determined for $y_i$.
If $y_i$ corresponds to a diagonal entry of $Q$ then $s_i$ can be
initialized to $+1$.

The main iteration step is the search for equations that directly
constrain any decision variable to be zero (Step 7a).  This is similar
to the zero diagonal algorithm.  The iteration also attempts to
determine sign information about the decision variables.  Steps 7b-7d
update the sign vector based on equality constraints involving a
single decision variable.  For example, a decision variable must be
zero if the decision variable has been previously determined to be
$\le 0$ and the current equality constraint implies that it must be
$\ge 0$ (Step 7c).  These decision variables can be removed from the
optimization.  Step 8 processes equality constraints involving two
decision variables.  The logic for this case is omitted due to space
constraints.  The processing of equality constraints can be performed
very fast using the $\texttt{find}$ command in Matlab.  Steps 9 and 10
prune monomials and zero out appropriate columns of $A$.  The
iteration continues until no additional information can be determined
about the sign of the decision variables.

\begin{table}[h]
\begin{tabbing}
1. \= {\tt Given:} \= Polynomials $\{ a_j \}_{j=1}^r$ in variables
   $x$.  Define \\
   \>\> $a(x,\dv):= a_0(x) + a_1(x)\dv_1 + \dots + a_r(x)\dv_r$ \\
2. \> {\tt Initialization:} Set $k=0$ and choose a finite set 
   $M_0:=\{\alpha_i\}_{i=1}^{m}$ \\
   \> \> $\subseteq \N^n$ such that 
   $\left[ \cup_{\dv \in \R^r} \frac{1}{2} C(a(x,\dv)) \right] \cap \N^n 
   \subseteq M_0$. \\
3. \> {\tt Form $Ay=b$:} Construct the equality constraint data,
   $A\in\R^{l\times (r+m^2)}$\\
   \>\> and $b\in\R^l$ obtained by equating coefficients of $a(x,\dv)=z^TQz$ \\
   \>\> where $z:=\bmtx x^{\alpha_1},\ldots, x^{\alpha_m} \emtx^T$ 
   and $y:=[\dv^T, \ vec(Q)^T]^T$. \\
4. \> {\tt Sign Data:} Initialize the $l\times 1$ vector $s$ 
    to be $s_i = +1$ if $y_i$ \\
   \> \> corresponds to a diagonal entry of $Q$.  Otherwise set 
   $s_i =$ \texttt{NaN}. \\
5. \> {\tt Iteration:} \\
6. \> \> Set $\cZ = \emptyset$, $\cS = \emptyset$, $k:=k+1$, and
    $M_k:=M_{k-1}$ \\
7. \> \> Pro\=cess equality equality constraints of the form 
          $a_{i,j} y_j = b_i$ \\
   \> \> where $a_{i,j} \ne 0$\\
\ \ 7a. \> \>  \> If $b_i=0$ then set $s_j=0$ and $\cZ = \cZ \cup j$\\ 
\ \ 7b. \> \>  \> Else if $s_j=$\texttt{NaN} then set 
        $s_j =$ sign$(a_{i,j} b_i)$ and $\cS = \cS \cup j$\\ 
\ \ 7c. \> \>  \> Else if $s_j=-1$ and sign$(a_{i,j} b_i)=+1$ then
         set $s_j = 0$ \\
        \> \>\>  and $\cS = \cS \cup j$\\ 
\ \ 7d. \> \>  \> Else if $s_j=+1$ and sign$(a_{i,j} b_i)=-1$ then
         set $s_j = 0$ \\
        \> \>\>  and $\cS = \cS \cup j$ \\
8. \> \> Process equality equality constraints of the form \\
   \>\>       $a_{i,j_1} y_{j_1} + a_{i,j_2} y_{j_2} = b_i$. \\
9. \> \> If for any $j\in \cZ$, $y_j$ corresponds to a diagonal entry 
    $Q_{i,i}$ \\
   \>\> then set $M_k := M_k \backslash \{ \alpha_i\}$ and  
    $\cZ = \cZ \cup \cI$ where $\cI$ are the \\
   \>\> entries of $y$ corresponding to the $i^{th}$ row and column of 
   $Q$.\\
10. \> \> For each $j\in \cZ$ set the $j^{th}$ column of $A$ equal to
    zero.   \\
11. \> \> Terminate if $\cZ = \emptyset$ and $\cS = \emptyset$
          otherwise return to step 6.\\
12. \> \ \ {\tt Return:}\ $M_k$, $A$, $b$, $s$
\end{tabbing}
\caption{Simplification Method for SOS Programs with One Constraint}
\label{tab:sossimp}
\vspace{-0.3in}
\end{table}

\section{Conclusions}
\label{sec:conclusions}

The Newton polytope is a method to prune unnecessary monomials from an
SOS decomposition.  The method requires the construction of a convex
hull and this can be time time consuming for polynomials with many
terms. This paper presented a zero diagonal algorithm for pruning
monomials. The algorithm is based on a simple property of positive
semidefinite matrices. The algorithm is fast since it only requires
searching a set of linear equality constraints for those having
certain properties.  Moreover, the set of monomials returned by the
algorithm is a subset of the set returned by the Newton polytope
method.  The zero diagonal algorithm was extended to a more general
reduction method for sum-of-squares programming.

\section{Acknowledgments}

This research was partially supported under the NASA Langley NRA
contract NNH077ZEA001N entitled ``Analytical Validation Tools for Safety
Critical Systems'' and the NASA Langley NNX08AC65A contract 
entitled 'Fault Diagnosis, Prognosis and Reliable Flight 
Envelope Assessment.'' The technical contract monitors are Dr. Christine
Belcastro and Dr. Suresh Joshi respectively.

\bibliography{sossimplify}

\begin{thebibliography}{10}

\bibitem{nasaworkshop09}
G.J. Balas, A.~Packard, P.~Seiler, and U.~Topcu.
\newblock Robustness analysis of nonlinear systems.
\newblock http://www.aem.umn.edu/$\sim$AerospaceControl/, 2009.

\bibitem{chesi09}
G.~Chesi, A.~Garulli, A.~Tesi, and A.~Vicino.
\newblock {\em Homogeneous Polynomial Forms for Robustness Analysis of
  Uncertain Systems}.
\newblock Springer, 2009.

\bibitem{chesi99}
G.~Chesi, A.~Tesi, A.~Vicino, and R.~Genesio.
\newblock On convexification of some minimum distance problems.
\newblock In {\em European Control Conference}, 1999.

\bibitem{choi95}
M.D. Choi, T.Y. Lam, and B.~Reznick.
\newblock Sums of squares of real polynomials.
\newblock {\em Proc. of Symposia in Pure Math.}, 58(2):103--126, 1995.

\bibitem{grunbaum03}
B.~Gr\"{u}nbaum.
\newblock {\em Convex Polytopes}.
\newblock Springer Verlag, 2003.

\bibitem{henrion09}
D.~Henrion, J.~B. Lasserre, and J.~Loefberg.
\newblock Gloptipoly 3: moments, optimization and semidefinite programming.
\newblock {\em Optimization Methods and Software}, 24(4-5):761--779, 2009.

\bibitem{lasserre01}
J.B. Lasserre.
\newblock Global optimization with polynomials and the problem of moments.
\newblock {\em {SIAM} Journal on Optimization}, 11(3):796--817, 2001.

\bibitem{yalmip04}
J.~Lofberg.
\newblock Yalmip : A toolbox for modeling and optimization in {MATLAB}.
\newblock In {\em Proc. of the CACSD Conference}, Taipei, Taiwan, 2004.

\bibitem{lofberg09}
J.~Lofberg.
\newblock Pre- and post-processing sum-of-squares problems in practice.
\newblock {\em IEEE Transactions on Automatic Control}, 54(5):1007--1011, 2009.

\bibitem{bronsted83}
A.~Br\o nsted.
\newblock {\em An Introduction to Convex Polytopes}.
\newblock Springer-Verlag, 1983.

\bibitem{parrilo00}
P.~Parrilo.
\newblock {\em Structured Semidefinite Programs and Semialgebraic Geometry
  Methods in Robustness and Optimization}.
\newblock PhD thesis, California Institute of Technology, 2000.

\bibitem{parrilo03}
P.~Parrilo.
\newblock Semidefinite programming relaxations for semialgebraic problems.
\newblock {\em Mathematical Programming Ser. B}, 96(2):293--320, 2003.

\bibitem{powers98}
V.~Powers and T.~W\"{o}rmann.
\newblock An algorithm for sums of squares of real polynomials.
\newblock {\em J. of Pure and Applied Algebra}, 127:99--104, 1998.

\bibitem{sostools04}
S.~Prajna, A.~Papachristodoulou, P.~Seiler, and P.~A. Parrilo.
\newblock {\em {SOSTOOLS}: Sum of squares optimization toolbox for {MATLAB}},
  2004.

\bibitem{reznick78}
B.~Reznick.
\newblock Extremal {PSD} forms with few terms.
\newblock {\em Duke Mathematical Journal}, 45(2):363--374, 1978.

\bibitem{reznick00}
B.~Reznick.
\newblock Some concrete aspects of {H}ilberts 17th problem.
\newblock {\em Contemporary Mathematics}, 253(251-272), 2000.

\bibitem{sedumi99}
J.F. Sturm.
\newblock Using {SeDuMi} 1.02, a {MATLAB} toolbox for optimization over
  symmetric cones.
\newblock {\em Optimization Methods and Software}, pages 625--653, 1999.

\bibitem{sturmfels98}
B.~Sturmfels.
\newblock Polynomial equations and convex polytopes.
\newblock {\em American Mathematical Monthly}, 105(10):907--922, 1998.

\bibitem{tan06}
W.~Tan.
\newblock {\em Nonlinear Control Analysis and Synthesis using Sum-of-Squares
  Programming}.
\newblock PhD thesis, Univ. of Calif., Berkeley, 2006.

\bibitem{topcu08b}
U.~Topcu.
\newblock {\em Quantitative Local Analysis of Nonlinear Systems}.
\newblock PhD thesis, Univ. of California, Berkeley, 2008.

\bibitem{waki09}
H.~Waki and M.~Muramatsu.
\newblock A facial reduction algorithm for finding sparse sos representations.
\newblock Technical Report CS-09-02, Dept. of Comp. Science, The University of
  Electro-Communications, 2009.

\end{thebibliography}

\end{document}